\documentclass[11pt,letterpaper,reqno]{amsart}

\usepackage[T1]{fontenc}
\usepackage{amsmath,amssymb,amsthm,amsfonts,mathtools}
\usepackage{enumitem}
\usepackage{hyperref}
\hypersetup{colorlinks=true,linkcolor=blue,citecolor=blue,urlcolor=blue}

\newtheorem{theorem}{Theorem}[section]
\newtheorem{lemma}[theorem]{Lemma}
\newtheorem{proposition}[theorem]{Proposition}

\newtheorem{problem}[theorem]{Problem}

\theoremstyle{definition}
\newtheorem{definition}[theorem]{Definition}
\newtheorem{remark}[theorem]{Remark}
\numberwithin{equation}{section}

\newcommand{\C}{\mathbb{C}}
\newcommand{\OO}{\mathcal{O}}
\DeclareMathOperator{\Aut}{Aut}

\begin{document}

\title{A Solution to a Problem of Rubel on Two-Parameter Normal Families of Entire Functions}

\author[Y.~He]{Yixin He}
\address{School of Mathematical Sciences, Fudan University, Shanghai 200433, P.~R.~China}
\email{hyx717math@163.com}

\author[Q.~Tang]{Quanyu Tang}
\address{School of Mathematics and Statistics, Xi'an Jiaotong University, Xi'an 710049, P.~R.~China}
\email{tang\_quanyu@163.com}

\author[T.~Zhang]{Teng Zhang}
\address{School of Mathematics and Statistics, Xi'an Jiaotong University, Xi'an 710049, P.~R.~China}
\email{teng.zhang@stu.xjtu.edu.cn}

\subjclass[2020]{30D45, 32A19, 32M17}

\keywords{Normal families, entire functions, Fatou--Bieberbach domains, holomorphic automorphisms, several complex variables}

\begin{abstract}
We construct an entire function
\[
F(z,a,b)\in \OO(\C^3)
\]
such that the family
\[
\{F(\,\cdot\,,a,b):a,b\in\C\}
\]
of entire functions of $z$ is normal on $\C$, while $F$ does not factor through a single entire parameter. This solves a problem of L.~A.~Rubel concerning Liouville-type rigidity. In fact, our example satisfies the stronger condition
\[
F_bF_{a,z}-F_aF_{b,z}\neq 0
\qquad\text{on }\C^3.
\]
The geometric core of the construction is a Fatou--Bieberbach domain contained in the thin region
\[
\{(u,v)\in\C^2:|u-v^2|<1+|v|\}.
\]
We obtain this domain from the basin of attraction of an explicit polynomial automorphism of $\C^2$, together with the theorem of Rosay and Rudin on attracting basins.
\end{abstract}

\maketitle

\section{Introduction}\label{sec:intro}

Normal family theory furnishes one of the central compactness principles in complex analysis.
From Montel's foundational work \cite{Mon27} and Marty's spherical-derivative criterion \cite{Mar31} to Zalcman's rescaling lemma \cite{Zal98} and its refinements \cite{CFZ04,PZ00}, normality has repeatedly turned value-distribution constraints and differential relations into strong compactness conclusions.
On the complex plane, global normality is especially restrictive.
In the translation-invariant setting it is measured by bounded spherical derivative, and the theorem of Clunie and Hayman \cite{CH65} shows that an entire function with bounded spherical derivative has order at most one; see also Eremenko's higher-dimensional reformulation in terms of Brody curves \cite{Ere10}.
These results are representative of a broad principle: once normality is available on all of $\C$, severe global rigidity phenomena begin to appear.

A different but equally classical theme is factorization under composition.
Beginning with Ritt's work on polynomial decomposition \cite{Rit22} and Gross's factorization program for meromorphic functions \cite{Gro68,GY72}, one seeks conditions under which an analytic object that appears to depend on several layers of structure in fact factors through a lower-dimensional one.
In the entire-function setting this perspective was sharpened by Rubel and Yang, who showed that for nonconstant entire functions $A$ and $B$ of one variable, the two-variable entire function $A(z)+B(w)$ is prime under composition \cite{RY95}.
Eremenko and Rubel \cite{ER96} then developed an arithmetic of entire functions under composition, proving in particular that every family of nonconstant entire functions of one complex variable has a unique strong greatest common right factor, and more generally that any family of pairwise dependent entire functions in several variables admits a strong common right factor.
Their factorization theorem for degenerate maps states that if $F:\C^N\to\C^2$ has Jacobian rank at most one, then $F$ factors through $\C$ or $\C^\ast$, see \cite[Theorem~3.2]{ER96}.
Related manifestations of forced one-dimensionality appear in the theory of imprimitive parametrizations of analytic curves \cite{Ng01}, where algebraic or analytic dependence yields a transcendental common right factor.

A third surrounding theme concerns rigidity in holomorphic families of entire functions.
Classical work of Julia, Lelong, and Tsuji on exceptional parameter sets for equations $f(z,w)=0$ initiated a program for understanding how one-variable value-distribution phenomena propagate in parameter space; see the historical discussion in \cite{Ere06}.
In a modern formulation, Eremenko \cite{Ere06} proved that if each member of a holomorphic family of entire functions omits a value, then the exceptional value varies holomorphically off a discrete singular set, and in the finite-order regime the dependence is in fact analytic in the parameter.
Such questions are also closely related to holomorphic dynamics; see, for instance, the structural-stability viewpoint in the work of Eremenko and Lyubich \cite{EL92}.
The common theme is again that global function-theoretic restrictions can force surprisingly low-dimensional parameter behavior.

Against this background, L.~A.~Rubel raised the following Liouville-type rigidity question: instead of asking whether a global constraint forces an entire function to be constant, one asks whether global normality forces a holomorphic two-parameter family to become effectively one-parameter. The problem appears to have first been recorded in the survey of Brannan and Hayman \cite[p.~14, Problem~2.73]{BH89} in 1989, and was later incorporated into Hayman's influential problem collection \emph{Research Problems in Function Theory}, whose fiftieth-anniversary edition was published by Hayman and Lingham in 2019 \cite[p.~53, Problem~2.73]{HL19}. Moreover, no progress on this problem is recorded in \cite{Ere03}, and \cite[p.~54, Update~2.73]{HL19} reports that no progress had been communicated to the editors, indicating that the problem appears to have remained open for nearly four decades.

\begin{problem}[Rubel]\label{prob:HL273}
Does there exist an entire function
\[
F(z,a,b)\in \OO(\C^3)
\]
such that the family
\[
\mathcal{F}:=\{F(\,\cdot\,,a,b):a,b\in\C\}\subset \OO(\C)
\]
is normal on $\C$, but $F$ is not of the form
\begin{equation}\label{eq:factor}
F(z,a,b)=G\bigl(z,H(a,b)\bigr)
\end{equation}
for entire functions $G,H\in \OO(\C^2)$?
\end{problem}

The exclusion of \eqref{eq:factor} is meant to rule out merely apparent two-parameter dependence.
If \eqref{eq:factor} holds, then the family is in fact parametrized by the single entire quantity $H(a,b)$; in the language of the source, one wants the family to have ``two honest parameters.''
Otherwise, one could have, for instance,
\[
F(z,a,b)=z+5a^2+\sin b,
\]
which is really only a one-parameter family in disguise.
Moreover, \eqref{eq:factor} imposes an infinitesimal rank-one constraint.
Indeed,
\[
F_a=G_w(z,H(a,b))\,H_a,\qquad F_b=G_w(z,H(a,b))\,H_b,
\]
and hence
\begin{equation}\label{eq:integrability}
F_b\,F_{a,z}=F_a\,F_{b,z}.
\end{equation}
Accordingly, Problem~\ref{prob:HL273} asks whether global normality of a holomorphic two-parameter family already forces the parameter dependence to be effectively one-dimensional.

A positive answer to Problem~\ref{prob:HL273} would exhibit a genuinely two-dimensional entire deformation whose one-variable slices remain globally normal on the whole plane.
A negative answer to Problem~\ref{prob:HL273} would be even more striking: it would show that for entire families parametrized by $\C^2$, global normality alone compels the parameters to collapse to a one-dimensional analytic leaf.
From this point of view, the problem sits exactly at the interface between compactness in the sense of Montel and factorization in the sense of Ritt--Gross--Rubel.

The results of Eremenko and Rubel make the structural target especially clear.
Once one can show that the parameter dependence has rank at most one---for example, by deriving an identity of the type \eqref{eq:integrability} or an equivalent Jacobian degeneracy statement---the existing factorization machinery becomes available \cite{ER96}.
The genuine difficulty is therefore not factorization after degeneracy, but rather degeneracy from normality.
Put differently, Problem~\ref{prob:HL273} asks whether a compactness hypothesis on the family of slices $z\mapsto F(z,a,b)$ can force a first-order integrability condition in the parameter variables.

In this paper, our main result gives an affirmative answer to Problem~\ref{prob:HL273}. In fact, we prove something slightly stronger than the mere failure of \eqref{eq:factor}.

\begin{theorem}\label{thm:main}
There exists an entire function $F\in \OO(\C^3)$ such that
\begin{enumerate}[label=\textup{(\roman*)}]
\item the family
\[
\{F(\,\cdot\,,a,b):a,b\in\C\}
\]
is normal on $\C$;
\item
\[
F_bF_{a,z}-F_aF_{b,z}\neq 0
\qquad\text{on }\C^3.
\]
In particular, $F$ cannot be written in the form \eqref{eq:factor} with entire $G,H\in\OO(\C^2)$.
\end{enumerate}
\end{theorem}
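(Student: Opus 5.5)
The plan is to take $F$ to be a quadratic polynomial in $z$ whose two coefficients are the components of a Fatou--Bieberbach map. Set
\[
F(z,a,b)=z^2+2v(a,b)\,z+u(a,b)=(z+v)^2+\bigl(u-v^2\bigr),
\]
where $\Phi=(u,v):\C^2\to\C^2$ is an injective holomorphic map with image contained in
\[
\Omega=\{(u,v)\in\C^2:|u-v^2|<1+|v|\}.
\]

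\textbf{Step 1: property (ii) is immediate.} We have $F_a=2v_az+u_a$ and $F_{a,z}=2v_a$, and similarly for $b$. A direct computation gives
\[
F_bF_{a,z}-F_aF_{b,z}=2\,(u_bv_a-u_av_b)=-2\det D\Phi .
\]
This is nonvanishing everywhere because $\Phi$ is injective, so its Jacobian never vanishes. The nonfactorization statement then follows from \eqref{eq:integrability}.

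\textbf{Step 2: normality.} Take any sequence $(a_n,b_n)$ and write $(u_n,v_n)=\Phi(a_n,b_n)$ and $c_n=u_n-v_n^2$, so that $|c_n|<1+|v_n|$.
\begin{itemize}
\item If $v_n$ stays bounded along a subsequence, then $c_n$ and $u_n$ are bounded as well. The polynomials $z^2+2v_nz+u_n$ then have bounded coefficients, and Montel's theorem gives a locally uniformly convergent subsequence.
\item If $|v_n|\to\infty$, then for $|z|\le R$ we have
\[
|F|\ge(|v_n|-R)^2-1-|v_n|\to\infty
\]
uniformly. Hence $F(\cdot,a_n,b_n)\to\infty$ locally uniformly, which is allowed in the spherical sense.
\end{itemize}
So the family is normal on $\C$. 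Everything therefore reduces to producing a Fatou--Bieberbach domain inside $\Omega$.

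\textbf{Step 3: the main obstacle, a Fatou--Bieberbach domain in $\Omega$.} First straighten the region. The polynomial automorphism $(u,v)\mapsto(w,v)=(u-v^2,v)$ maps $\Omega$ onto
\[
\Omega'=\{|w|<1+|v|\}.
\]
Note that no Fatou--Bieberbach domain fits inside a product $\{|w|<1\}\times\C$: by Liouville, the bounded component would be constant. So the linear growth in $|v|$ is essential, and the domain must be thin in $w$ yet unbounded in $v$.

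I would obtain the domain as the basin of attraction of an attracting fixed point of a polynomial automorphism, which is biholomorphic to $\C^2$ by the Rosay--Rudin theorem. As a first candidate I would try a Hénon-type map
\[
T(w,v)=\bigl(Kw^2+\delta v,\;w\bigr)\quad\text{or a rescaled variant},
\]
with $K$ large and $\delta$ small. At the origin, $DT$ has eigenvalues $\pm\sqrt{\delta}$, so the origin is an attracting fixed point.

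The key estimate is a filtration argument. I would show that the set
\[
V^+=\{|w|\ge 1+|v|\}
\]
(possibly after enlarging it slightly) is forward invariant under $T$, and that $|w|$ grows at least geometrically along orbits starting in $V^+$. On $V^+$ the quadratic term dominates, since
\[
|Kw^2|-\delta|v|\ge K|w|^2-\delta|w|,
\]
which, for suitable constants, exceeds $1+|w|$ by a definite factor. Consequently every point of $V^+$ escapes to infinity, so the basin of the origin misses $V^+$ and lies in $\Omega'$.

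Tuning $K$ and $\delta$, or conjugating by a dilation, so that the filtration threshold matches the constant $1$ in $1+|v|$ is the delicate point. Near $w=0$ the quadratic term is weak, so one may need an additional affine adjustment, or a different low-degree automorphism, to force escape for small $|w|$ with $|w|\ge1+|v|$. This is where I expect the real work to lie.

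Pulling the resulting domain back by $(w,v)\mapsto(w+v^2,v)$ gives the required $\Phi$.
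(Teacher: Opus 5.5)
Your proposal is correct. It uses the same Hénon-type automorphism as the paper up to a linear change of coordinates, but it proves that the basin lies in the thin region by a different argument.

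\textbf{The step you flag as delicate is easy.} On $V^+=\{|w|\ge 1+|v|\}$ one automatically has $|w|\ge 1$, so there is no small-$|w|$ regime to handle. Put $t=|w|$ and use $|v|\le t-1$:
\[
|Kw^2+\delta v|-(1+|w|)\;\ge\; Kt^2-(1+|\delta|)\,t+|\delta|-1 .
\]
The right side equals $K-2$ at $t=1$ and is increasing for $t\ge 1$. Hence $T(V^+)\subset V^+$ whenever $K\ge 2$ and $0<|\delta|<1$. Since $V^+$ is closed and $0\notin V^+$, no orbit starting in $V^+$ can converge to $0$. So the basin of the attracting fixed point lies in $\Omega'$, and you do not even need the escape-to-infinity estimate.

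In fact the paper's map has exactly your form. Conjugating its $f(z_1,z_2)=(z_2,(z_2^2-z_1)/2)$ by $L$ gives $(x,y)\mapsto(\tfrac{25}{2}x^2-\tfrac12 y,\,x)$, which is your $T$ with $K=25/2$ and $\delta=-1/2$.

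\textbf{Comparison with the paper.}
\begin{itemize}
\item The paper builds a forward-invariant weighted box $B$ around $0$. It writes the basin as the increasing union $\bigcup_m g^m(B)$ with $g=f^{-1}$, and checks that the two-piece set $D_1\cup D_2$ is mapped into itself by $g$. This yields an explicit exhaustion of the basin, and hence a self-contained proof that it is a domain.
\item You use a single closed, forward-invariant set $V^+$ that avoids the origin. This locates the basin in one estimate and makes the local-basin step unnecessary, since connectedness already follows from Rosay--Rudin.
\end{itemize}

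Your quadratic $F=(z+v)^2+(u-v^2)$, in place of the paper's affine $U+zV$, makes no real difference. The same two-case argument gives normality for $(u,v)\in S$, and the differential expression becomes $-2\det D\Phi$ instead of $-\det D\Phi$.
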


Let us make explicit which parts of the paper are new and which are known. Theorem~\ref{thm:RR} below is the classical theorem of Rosay and Rudin on attracting basins, and Lemma~\ref{lem:normal-affine} is a short normality argument for a family of affine functions. The main new ingredient is Proposition~\ref{prop:fb-parabola}, where we construct a Fatou--Bieberbach domain inside
\[
S:=\{(u,v)\in\C^2:|u-v^2|<1+|v|\}.
\]
Once such a domain is available, Theorem~\ref{thm:main} follows by pulling back the affine family
\[
z\mapsto u+vz,\qquad (u,v)\in S,
\]
along a biholomorphism $\Phi=(U,V):\C^2\to\Omega$ and setting
\[
F(z,a,b):=U(a,b)+zV(a,b).
\]
The differential quantity in part~\textup{(ii)} then becomes the Jacobian determinant of $\Phi$, up to sign.

As for previous work, the literature we use falls into two separate strands. On the one hand, the problem itself belongs to the theory of normal families; for background on the rescaling philosophy surrounding such questions, especially in connection with Bloch's principle, we refer to Bergweiler's survey \cite{Ber06}. On the other hand, our construction relies on several complex variables: Rosay and Rudin provide the basin theorem used in Section~\ref{sec:fb}, while work such as Buzzard and Hubbard's shows that thin Fatou--Bieberbach domains can occur in surprisingly restrictive regions \cite{BH00}. Our contribution is to connect these two strands by producing a Fatou--Bieberbach domain in the specific coefficient region $S$, which is exactly what is needed to solve Problem~\ref{prob:HL273}.

\subsection{Paper organization}\label{subsec:organization}
Section~\ref{sec:fb} constructs a Fatou--Bieberbach domain inside the parabolic tube $S$. Section~\ref{sec:normal} proves that the corresponding affine family is normal on $\C$. In Section~\ref{sec:proof}, we combine these two ingredients to build the required entire function $F$, and we verify the differential obstruction that rules out factorization through a single entire parameter.

\section{A Fatou--Bieberbach domain in a parabolic tube}\label{sec:fb}
We begin by recalling the definition of a \emph{Fatou--Bieberbach domain}. Such domains abound in $\C^n$ for every $n>1$; see~\cite[Chapter~4]{For11} for a survey. For example, the basin of attraction of a holomorphic automorphism of $\C^n$ is either all of $\C^n$ or a Fatou--Bieberbach domain; see~\cite[Appendix]{RR88} and~\cite[Theorem~4.3.2]{For11}. 
\begin{definition}
A \emph{Fatou--Bieberbach domain} in $\C^2$ is a proper domain $\Omega\subsetneq\C^2$ which is biholomorphic to $\C^2$. 
\end{definition}

We will use the following theorem of Rosay and Rudin \cite[p.~49, ($*$)]{RR88}.

\begin{theorem}[Rosay--Rudin]\label{thm:RR}
Let $\Psi\in\Aut(\C^n)$ fix a point $p\in\C^n$, and suppose that every eigenvalue of $D\Psi(p)$ has modulus strictly less than $1$. Then the basin of attraction
\[
\{z\in\C^n:\Psi^m(z)\to p\text{ as }m\to\infty\}
\]
is biholomorphic to $\C^n$.
\end{theorem}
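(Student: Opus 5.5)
The plan is to follow the original Rosay--Rudin strategy: conjugate $\Psi$ on its basin $\Omega:=\{z:\Psi^m(z)\to p\}$ to a \emph{polynomial normal form} $T$ whose own basin is all of $\C^n$, and transport the conjugacy along the dynamics. After a translation we take $p=0$. First, $\Omega$ is open and satisfies $\Psi(\Omega)=\Omega$. To see this, choose a norm adapted to $A:=D\Psi(0)$ so that $\|A\|<1$. Then a small ball $B$ around $0$ satisfies $\Psi(\overline B)\subset B$, so every orbit entering $B$ converges to $0$. Hence $\Omega=\bigcup_m \Psi^{-m}(B)$, which is an increasing union of open sets and is connected. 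Fix constants $0<\beta\le\alpha<1$ with $\beta<|\lambda_j|<\alpha$ for all eigenvalues $\lambda_j$ of $A$. Then $|\Psi^m(z)|\le C\alpha^m$ uniformly on compact subsets of $\Omega$, after finitely many preliminary iterates.

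The key step is a Poincar\'e--Dulac type normal form. Choose coordinates in which $A$ is lower triangular, and an integer $N$ with $\alpha^N<\beta^{d}$, where $d$ bounds the degree growth described below. I will construct a polynomial map $G$ with $G(0)=0$ and $DG(0)=I$, together with a lower-triangular polynomial automorphism $T(w)=Aw+(\text{higher order})$, in which the $j$th coordinate of the higher-order part depends only on $w_1,\dots,w_{j-1}$ and contains only resonant monomials $w^\gamma$ with $\lambda_j=\lambda^\gamma$, such that
\[
G\circ\Psi-T\circ G=O(|z|^{N}).
\]
This is the usual degree-by-degree elimination: at order $k$ one must solve a homological equation whose operator $h\mapsto h\circ A-Ah$ acts on homogeneous terms with eigenvalues $\lambda^\gamma-\lambda_j$. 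This operator is invertible except on resonant monomials, and those are absorbed into $T$. Since $|\lambda_j|<1$, only finitely many resonances exist, namely those with $|\gamma|\le\log\beta/\log\alpha$. Hence $T$ is polynomial and triangular, so it is a polynomial automorphism. Moreover $T^{-m}$ grows at most like $C\beta^{-dm}(1+|w|)^{d}$, which can be checked coordinate by coordinate by induction down the triangular structure.

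Next define $\Phi_m:=T^{-m}\circ G\circ\Psi^m$ on $\Omega$. Write $\Phi_{m+1}-\Phi_m=T^{-m-1}\circ(G\circ\Psi)\circ\Psi^m-T^{-m-1}\circ(T\circ G)\circ\Psi^m$. Using the mean value theorem for $T^{-m-1}$ on a small ball, the normal-form error $O(|\Psi^m z|^N)=O(\alpha^{Nm})$, and the derivative bound on $T^{-m-1}$, I will bound this difference on compacts by $C(\alpha^N\beta^{-d})^m$. This is summable by the choice of $N$, so $\Phi_m\to\Phi$ locally uniformly on $\Omega$. The limit satisfies $\Phi\circ\Psi=T\circ\Phi$. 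Near $0$, $\Phi$ is a small perturbation of $G$, so $D\Phi(0)=I$ and $\Phi$ is biholomorphic near $0$.

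The conjugacy then gives the global properties. For injectivity, suppose $\Phi(z)=\Phi(z')$. Then $\Phi(\Psi^m z)=T^m\Phi(z)=\Phi(\Psi^m z')$, and for large $m$ both points $\Psi^m z$ and $\Psi^m z'$ lie in the neighborhood where $\Phi$ is injective. Hence $\Psi^m z=\Psi^m z'$, and so $z=z'$ because $\Psi$ is injective. The Jacobian of $\Phi$ never vanishes, for the same reason. For surjectivity, given $w\in\C^n$, the point $T^m w$ tends to $0$ because $T$ is triangular with diagonal part $A$, and so $0$ attracts globally. Hence $T^m w=\Phi(\zeta)$ for some $\zeta$ near $0$, and then $w=\Phi(\Psi^{-m}\zeta)$ with $\Psi^{-m}\zeta\in\Omega$. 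Therefore $\Phi:\Omega\to\C^n$ is a biholomorphism. I expect the main obstacle to be the quantitative comparison of growth rates: the normal-form error must contract faster than $T^{-m}$ expands. This forces the precise finite-resonance triangular bookkeeping above, rather than a naive linearization, which fails exactly when resonances occur.
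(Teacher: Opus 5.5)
Your proposal is correct and is essentially the argument of Rosay and Rudin themselves: a triangular resonant normal form $T$, a polynomial $G$ conjugating $\Psi$ to $T$ up to order $N$, and $\Phi=\lim_{m\to\infty}T^{-m}\circ G\circ\Psi^m$; the paper does not reprove this but simply cites \cite{RR88}. You should state that the triangular form of $A$ is chosen with $|\lambda_1|\ge\cdots\ge|\lambda_n|$, since this ordering forces every resonant monomial in the $j$th component to involve only $w_1,\dots,w_{j-1}$. Also, for non-diagonal $A$ the homological operator is only triangular in a suitably ordered monomial basis, which still lets the resonant monomials span a complement of its image.
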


We now construct the coefficient domain needed later in the proof of Theorem~\ref{thm:main}.

\begin{proposition}\label{prop:fb-parabola}
Let
\[
S:=\{(u,v)\in\C^2:|u-v^2|<1+|v|\}.
\]
Then there exists a Fatou--Bieberbach domain $\Omega\subset S$.
\end{proposition}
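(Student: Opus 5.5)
The plan is to reduce Proposition~\ref{prop:fb-parabola} to a statement about a simpler region by means of a shear, and then to exhibit an explicit Hénon-type automorphism whose attracting basin lies in that region. The shear $\sigma(u,v):=(u-v^2,v)$ is a polynomial automorphism of $\C^2$ with inverse $(w,v)\mapsto(w+v^2,v)$. It maps $S$ onto
\[
T:=\{(w,v)\in\C^2:|w|<1+|v|\}.
\]
So it suffices to find a Fatou--Bieberbach domain $\Omega'\subset T$ and then set $\Omega:=\sigma^{-1}(\Omega')$.

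\textbf{Construction of the automorphism.} Fix constants $K\ge 3$ and $0<|\delta|<1$, and define
\[
\Psi(w,v):=(Kw^2+\delta v,\; w).
\]
This is a polynomial automorphism of $\C^2$, with inverse $(x,y)\mapsto\bigl(y,\,\delta^{-1}(x-Ky^2)\bigr)$. It fixes the origin, and
\[
D\Psi(0)=\begin{pmatrix}0&\delta\\ 1&0\end{pmatrix}
\]
has eigenvalues $\pm\sqrt{\delta}$, both of modulus less than $1$. By Theorem~\ref{thm:RR}, the basin $\Omega'$ of $0$ under $\Psi$ is biholomorphic to $\C^2$. Being a biholomorphic image of $\C^2$, it is open and connected.

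\textbf{Key step: the complement of $T$ escapes.} Let $E:=\C^2\setminus T=\{|w|\ge 1+|v|\}$ and take $(w,v)\in E$, with image $(w',v')=\Psi(w,v)$. Then $|w|\ge1$ and $|v|\le |w|-1<|w|$, so
\[
|w'|\ge K|w|^2-|\delta||v|\ge 3|w|^2-|w|\ge 1+|w| = 1+|v'|,
\]
where the last inequality uses $3|w|^2\ge 1+2|w|$ for $|w|\ge1$. Hence $\Psi(E)\subset E$, and along any orbit starting in $E$ the first coordinate increases by at least $1$ at each step. Such an orbit tends to infinity, never to $0$, so $E\cap\Omega'=\emptyset$, that is, $\Omega'\subset T$. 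In particular $\Omega'\neq\C^2$, so $\Omega'$ is a Fatou--Bieberbach domain. Pulling back by the automorphism $\sigma$ gives the Fatou--Bieberbach domain $\Omega=\sigma^{-1}(\Omega')\subset S$.

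\textbf{Main obstacle.} The only real difficulty is choosing the map so that the whole complement of $T$, including the bounded part near $|w|\approx 1$ and not only a neighbourhood of infinity, is forward invariant and escaping. This is why a large coefficient $K$ is placed in front of $w^2$ rather than using the standard normalization. If the inequality $K|w|^2-|\delta||v|\ge 1+|w|$ turned out to be too tight, the first thing I would try is to increase $K$ or decrease $|\delta|$. Both changes only strengthen the estimate and leave the attracting fixed point at the origin intact.
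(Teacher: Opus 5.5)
Your proof is correct and has the same architecture as the paper's: the basin of an attracting fixed point of a quadratic H\'enon-type automorphism, the Rosay--Rudin theorem, an invariance argument that traps the basin, and the same shear $(x,y)\mapsto(x+y^2,y)$. In fact the paper's $f$, after swapping coordinates, is $(x,y)\mapsto(\tfrac12x^2-\tfrac12y,\,x)$, i.e.\ your $\Psi$ with $K=\tfrac12$, $\delta=-\tfrac12$. The difference is bookkeeping: the paper shows $D_1\cup D_2$ is invariant under $f^{-1}$ and contains a local basin $B$ (equivalently, its complement is forward invariant and misses $B$). Your choice $K\ge 3$ instead makes the complement of $T$ itself forward invariant and escaping, so you need neither the local-basin estimate, nor the bounded box $D_1$, nor the rescaling $L$.
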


\begin{proof}
Consider the polynomial automorphism of $\C^2$
\[
f(z_1,z_2):=\left(z_2,\frac{z_2^2-z_1}{2}\right),
\]
with inverse
\[
g(w_1,w_2)=f^{-1}(w_1,w_2)=\bigl(w_1^2-2w_2,w_1\bigr).
\]
Let
\[
D:=\{z\in\C^2:f^m(z)\to 0\text{ as }m\to\infty\}
\]
be the basin of attraction of the fixed point $0$.

To place $D$ inside a thin set, we first identify a small forward-invariant neighborhood of the origin.

\medskip
\noindent\textbf{Step 1: a local basin.}
Define a weighted norm on $\C^2$ by
\[
\|(z_1,z_2)\|_*:=\max\left\{\frac{3}{4}|z_1|,|z_2|\right\},
\]
and set
\[
B:=\left\{z\in\C^2:\|z\|_*<\frac16\right\}.
\]
If $r:=\|z\|_*<\frac16$, then $|z_2|\le r$ and $|z_1|\le \frac{4}{3}r$. Hence
\[
\|f(z)\|_*=
\max\left\{\frac34|z_2|,\left|\frac{z_2^2-z_1}{2}\right|\right\}
\le
\max\left\{\frac34 r,\frac{r^2+\frac43r}{2}\right\}
\le \frac34 r,
\]
because $r<\frac16$ implies $r^2+\frac43r\le \frac32 r$. Therefore
\[
\|f(z)\|_*\le \frac34\|z\|_*
\qquad\text{for every }z\in B.
\]
In particular, $f(B)\subset B$, and thus $f^m(z)\to 0$ for every $z\in B$.

This gives the usual exhaustion of the attracting basin:
\begin{equation}\label{eq:D_union}
D=\bigcup_{m\ge0} g^m(B).
\end{equation}
Indeed, if $z\in D$, then $f^m(z)\to 0$, so $f^m(z)\in B$ for some $m$, and hence $z=g^m(f^m(z))\in g^m(B)$. Conversely, if $z\in g^m(B)$, then $f^m(z)\in B$, so the forward orbit of $z$ converges to $0$. Since $f(B)\subset B$, applying $g=f^{-1}$ yields $B\subset g(B)$, and therefore
\[
g^m(B)\subset g^{m+1}(B)\qquad (m\ge 0).
\]
Thus $D$ is an increasing union of connected open sets, hence a domain.

The next step is to trap this domain inside a simple explicit set.

\medskip
\noindent\textbf{Step 2: a thin containing set.}
Fix
\[
D_1:=\{(z_1,z_2)\in\C^2:|z_1|<25,\ |z_2|<5\}
\]
and
\[
D_2:=\{(z_1,z_2)\in\C^2:|z_1|\ge 10+|z_2|\}.
\]
We claim that
\begin{equation}\label{eq:g_invariance}
g(D_1)\subset D_1\cup D_2,
\qquad
g(D_2)\subset D_2.
\end{equation}

To prove the first inclusion, let $w=(w_1,w_2)\in D_1$ and write
\[
z=g(w)=(w_1^2-2w_2,w_1).
\]
If $|w_1|<5$, then $|z_2|=|w_1|<5$. If also $|z_1|<25$, then $z\in D_1$. Otherwise $|z_1|\ge 25$, and since $|z_2|<5$ we obtain
\[
|z_1|\ge 25>10+|z_2|,
\]
so $z\in D_2$.

Now suppose $5\le |w_1|<25$. Then $|z_2|=|w_1|$ and $|w_2|<5$, so
\[
|z_1|=|w_1^2-2w_2|\ge |w_1|^2-2|w_2|>|w_1|^2-10.
\]
Since
\[
|w_1|^2-10-(10+|w_1|)=|w_1|^2-|w_1|-20=(|w_1|-5)(|w_1|+4)\ge 0,
\]
it follows that
\[
|z_1|>10+|z_2|,
\]
so again $z\in D_2$. This proves $g(D_1)\subset D_1\cup D_2$.

For the second inclusion, let $w=(w_1,w_2)\in D_2$ and again put $z=g(w)$. Then
\[
|w_1|\ge 10+|w_2|,
\]
so in particular $|w_1|\ge 10$. Using $|w_2|\le |w_1|-10$, we obtain
\begin{align*}
|z_1|-|z_2|
&=|w_1^2-2w_2|-|w_1| \\
&\ge |w_1|^2-2|w_2|-|w_1| \\
&\ge |w_1|^2-2(|w_1|-10)-|w_1| \\
&=|w_1|^2-3|w_1|+20.
\end{align*}
Since $|w_1|\ge 10$, the last quantity is at least $90$, and therefore
\[
|z_1|-|z_2|\ge 90>10.
\]
Hence $|z_1|\ge 10+|z_2|$, so $z\in D_2$. This proves \eqref{eq:g_invariance}.

Because $B\subset D_1$, an induction based on \eqref{eq:g_invariance} yields
\[
g^m(B)\subset D_1\cup D_2
\qquad\text{for every }m\ge 0.
\]
Combining this with \eqref{eq:D_union}, we obtain
\begin{equation}\label{eq:D_thin}
D\subset D_1\cup D_2.
\end{equation}
In particular, $D$ is a proper subset of $\C^2$, since for example $(0,10)\notin D_1\cup D_2$.

At this point, Theorem~\ref{thm:RR} implies that the attracting basin is a Fatou--Bieberbach domain.

\medskip
\noindent\textbf{Step 3: $D$ is a Fatou--Bieberbach domain.}
The derivative of $f$ at the origin is
\[
Df(0)=
\begin{pmatrix}
0 & 1 \\
-\frac12 & 0
\end{pmatrix}.
\]
Hence
\[
Df(0)^2=-\frac12 I,
\]
so every eigenvalue $\lambda$ of $Df(0)$ satisfies $\lambda^2=-\frac12$, and therefore
\[
|\lambda|=\frac{1}{\sqrt2}<1.
\]
By Theorem~\ref{thm:RR}, the basin $D$ is biholomorphic to $\C^2$. Since $D$ is proper by \eqref{eq:D_thin}, it is a Fatou--Bieberbach domain.

It remains to place this domain inside the desired parabolic tube.

\medskip
\noindent\textbf{Step 4: straightening into $S$.}
Define a linear automorphism of $\C^2$ by
\[
L(z_1,z_2):=\left(\frac{z_2}{25},\frac{z_1}{25}\right)=:(x,y),
\]
and a polynomial automorphism by
\[
A(x,y):=(x+y^2,y)=:(u,v).
\]
We claim that
\[
L(D)\subset T:=\{(x,y)\in\C^2:|x|<1+|y|\}.
\]
Indeed, if $(z_1,z_2)\in D_1$, then
\[
|x|=\frac{|z_2|}{25}<\frac15<1\le 1+|y|.
\]
If $(z_1,z_2)\in D_2$, then $|z_2|\le |z_1|-10$, so
\[
|x|=\frac{|z_2|}{25}\le \frac{|z_1|-10}{25}=|y|-\frac25<1+|y|.
\]
Together with \eqref{eq:D_thin}, this proves $L(D)\subset T$.

Now set
\[
\Omega:=A(L(D)).
\]
Since $A$ and $L$ are automorphisms of $\C^2$ and $D$ is a Fatou--Bieberbach domain, so is $\Omega$. Moreover, if $(u,v)=A(x,y)$ with $(x,y)\in T$, then
\[
u-v^2=x,
\qquad v=y,
\]
whence
\[
|u-v^2|=|x|<1+|y|=1+|v|.
\]
Thus $\Omega\subset S$, as required.
\end{proof}

\begin{remark}
The present construction is in the same general spirit as thin Fatou--Bieberbach domain constructions; compare \cite{BH00}. In the argument above, however, all dynamical estimates needed for Proposition~\ref{prop:fb-parabola} are verified directly, and the only external input is Theorem~\ref{thm:RR}.
\end{remark}

\section{Normality of the associated affine family}\label{sec:normal}

For $(u,v)\in S$, define
\begin{equation}\label{eq:def-f}
    f_{u,v}(z):=u+vz.
\end{equation}
Set
\[
\mathcal{A}_S:=\{f_{u,v}:(u,v)\in S\}.
\]

The point of Proposition~\ref{prop:fb-parabola} is to ensure that the coefficient pair 
$(u,v)$ arising from the biholomorphism in Section~\ref{sec:proof} lies in $S$. We now verify that $S$ was chosen precisely so that the corresponding affine family is normal.

\begin{lemma}\label{lem:normal-affine}
The family $\mathcal{A}_S$ is normal on $\C$.
\end{lemma}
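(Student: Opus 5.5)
Normality here is in the sense allowing locally uniform convergence to $\infty$ (spherical metric). I will prove it by the sequence criterion: given any sequence $(u_n,v_n)\in S$, I extract a subsequence along which $f_n(z):=u_n+v_nz$ converges locally uniformly on $\C$ either to an entire function or to $\infty$. The argument splits according to whether $(v_n)$ is bounded.

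\emph{Case 1: $(v_n)$ bounded.} The defining inequality of $S$ gives $|u_n|\le |v_n|^2+1+|v_n|$, so $(u_n)$ is bounded as well. After passing to a subsequence, $u_n\to u$ and $v_n\to v$, and then $f_n\to u+vz$ locally uniformly, since $|f_n(z)-(u+vz)|\le |u_n-u|+|v_n-v|\,|z|$.

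\emph{Case 2: $(v_n)$ unbounded.} Pass to a subsequence with $|v_n|\to\infty$ and write $u_n=v_n^2+e_n$ with $|e_n|<1+|v_n|$. Then
\[
f_n(z)=v_n^2+e_n+v_nz, \qquad |f_n(z)|\ge |v_n|^2-(1+|v_n|)-|v_n|\,|z|.
\]
On $|z|\le R$ this lower bound is at least $|v_n|^2-(R+1)|v_n|-1\to\infty$, uniformly in $z$. Hence $f_n\to\infty$ locally uniformly on $\C$.

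The only real point is Case 2, and this is precisely why $S$ is a tube of width $1+|v|$ around the parabola $u=v^2$. The linear term $v_nz$ and the error $e_n$ are both $O(|v_n|)$ on compacta, so they cannot cancel the quadratic term $v_n^2$, which dominates. Alternatively, one could verify Marty's criterion through a bound on the spherical derivative $|v|/(1+|u+vz|^2)$ on compacta, but the sequence argument above is more direct.
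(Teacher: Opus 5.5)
Your proof is correct and follows essentially the same route as the paper: a split into bounded $(v_n)$, where a subsequence converges to an affine limit, and $|v_n|\to\infty$, where the estimate $|v_n|^2-(R+1)|v_n|-1\to\infty$ gives locally uniform convergence to $\infty$. The only cosmetic difference is that in Case 1 you bound $u_n$ directly, whereas the paper extracts a convergent subsequence of the error terms $\delta_n=u_n-v_n^2$.
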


\begin{proof}
Let
\[
f_n(z)=u_n+v_n z,
\qquad (u_n,v_n)\in S,
\]
be an arbitrary sequence in $\mathcal{A}_S$. By the definition of $S$, there exists $\delta_n\in\C$ such that
\begin{equation}\label{eq:delta}
u_n=v_n^2+\delta_n,
\qquad |\delta_n|<1+|v_n|.
\end{equation}
Passing to a subsequence if necessary, we may assume either that $\{v_n\}$ is bounded or that $|v_n|\to\infty$.

\smallskip
\noindent\textbf{Case 1: $\{v_n\}$ is bounded.}
Passing to a further subsequence, we may assume that $v_n\to v\in\C$. By \eqref{eq:delta}, the sequence $\{\delta_n\}$ is then bounded as well, so after another subsequence we may also assume that $\delta_n\to\delta\in\C$. Hence
\[
u_n=v_n^2+\delta_n\longrightarrow v^2+\delta=:u,
\]
and therefore
\[
f_n(z)=u_n+v_n z\longrightarrow u+vz
\]
locally uniformly on $\C$.

\smallskip
\noindent\textbf{Case 2: $|v_n|\to\infty$.}
Let $K\subset\C$ be compact, and write
\[
R_K:=\sup_{z\in K}|z|.
\]
Using \eqref{eq:delta}, for every $z\in K$ we obtain
\begin{align*}
|f_n(z)|
&=|v_n^2+zv_n+\delta_n| \\
&\ge |v_n|^2-|z|\,|v_n|-|\delta_n| \\
&\ge |v_n|^2-R_K|v_n|-(1+|v_n|).
\end{align*}
Therefore
\[
\inf_{z\in K}|f_n(z)|\ge |v_n|^2-(R_K+1)|v_n|-1\longrightarrow\infty.
\]
Equivalently, $f_n\to\infty$ uniformly on $K$ in the spherical metric.

In both cases we have found a subsequence converging uniformly on compact subsets of $\C$ in the spherical metric. Hence $\mathcal{A}_S$ is normal on $\C$.
\end{proof}

\section{Proof of the main theorem}\label{sec:proof}

We are now ready to combine the geometric input from Section~\ref{sec:fb} with the normality statement from Section~\ref{sec:normal}.

\begin{proof}[Proof of Theorem~\ref{thm:main}]
By Proposition~\ref{prop:fb-parabola}, there exists a Fatou--Bieberbach domain $\Omega\subset S$. Choose a biholomorphism
\[
\Phi=(U,V):\C^2_{(a,b)}\longrightarrow \Omega\subset\C^2_{(u,v)}.
\]
Since $\Phi$ is holomorphic on all of $\C^2$, its components $U$ and $V$ are entire functions of $(a,b)$. Define
\begin{equation}\label{eq:defF}
F(z,a,b):=U(a,b)+zV(a,b).
\end{equation}
Then $F\in\OO(\C^3)$.

For each $(a,b)\in\C^2$, we have $(U(a,b),V(a,b))\in\Omega\subset S$, so
\[
F(\,\cdot\,,a,b)=f_{U(a,b),V(a,b)}\in\mathcal{A}_S,
\]
where $f_{U(a,b),V(a,b)}$ is defined by \eqref{eq:def-f}.
Hence
\[
\{F(\,\cdot\,,a,b):a,b\in\C\}\subset \mathcal{A}_S.
\]
By Lemma~\ref{lem:normal-affine}, the family $\mathcal{A}_S$ is normal on $\C$, and every subfamily of a normal family is normal. Therefore
\[
\{F(\,\cdot\,,a,b):a,b\in\C\}
\]
is normal on $\C$.

It remains to prove the non-factorization statement. Assume to the contrary that
\[
F(z,a,b)=G\bigl(z,H(a,b)\bigr)
\]
for some entire functions $G,H\in\OO(\C^2)$. Write the second variable of $G$ as $w$. By the chain rule,
\[
F_a(z,a,b)=G_w\bigl(z,H(a,b)\bigr)\,H_a(a,b),
\qquad
F_b(z,a,b)=G_w\bigl(z,H(a,b)\bigr)\,H_b(a,b),
\]
and
\[
F_{a,z}(z,a,b)=G_{zw}\bigl(z,H(a,b)\bigr)\,H_a(a,b),
\qquad
F_{b,z}(z,a,b)=G_{zw}\bigl(z,H(a,b)\bigr)\,H_b(a,b).
\]
Therefore
\[
F_bF_{a,z}-F_aF_{b,z}\equiv 0
\qquad\text{on }\C^3.
\]

For the function defined by \eqref{eq:defF}, however, a direct computation gives
\[
F_a=U_a+zV_a,
\qquad
F_b=U_b+zV_b,
\qquad
F_{a,z}=V_a,
\qquad
F_{b,z}=V_b,
\]
so
\[
F_bF_{a,z}-F_aF_{b,z}
=
(U_b+zV_b)V_a-(U_a+zV_a)V_b
=
U_bV_a-U_aV_b
=
-\det D\Phi.
\]
Since $\Phi=(U,V):\C^2\to\Omega$ is biholomorphic and $\Phi^{-1}$ is holomorphic, the holomorphic inverse function theorem implies that
\[
\det D\Phi\neq 0
\qquad\text{everywhere on }\C^2.
\]
Hence
\[
F_bF_{a,z}-F_aF_{b,z}\neq 0
\qquad\text{everywhere on }\C^3,
\]
which contradicts the factorization assumption. This proves part~\textup{(ii)}, and thus completes the proof of Theorem~\ref{thm:main}.
\end{proof}

\begin{remark}
The differential criterion mentioned in the original problem is therefore built into the present construction. Indeed,
\[
F_bF_{a,z}-F_aF_{b,z}=-\det D\Phi,
\]
so the obstruction to factorization is exactly the nonvanishing Jacobian of the biholomorphism $\Phi$.
\end{remark}

	\section*{Declaration of competing interest}
	The authors declare no competing interests.
	
	\section*{Data availability}
	No data was used for the research described in the article.
	
	\section*{Acknowledgments}
Teng Zhang is supported by the China Scholarship Council, the Young Elite Scientists Sponsorship Program for PhD Students (China Association for Science and Technology), and the Fundamental Research Funds for the Central Universities at Xi'an Jiaotong University (Grant No.~xzy022024045).


\begin{thebibliography}{99}

\bibitem{Ber06}
W.~Bergweiler, \emph{Bloch's principle}, Comput. Methods Funct. Theory
\textbf{6} (2006), no.~1, 77--108.
\href{https://doi.org/10.1007/BF03321119}{doi:10.1007/BF03321119}.

\bibitem{BH89}
D.~A.~Brannan and W.~K.~Hayman, \emph{Research problems in complex analysis},
Bull. London Math. Soc. \textbf{21} (1989), no.~1, 1--35.
\href{https://doi.org/10.1112/blms/21.1.1}{doi:10.1112/blms/21.1.1}.

\bibitem{BH00}
G.~T.~Buzzard and J.~H.~Hubbard, \emph{A Fatou--Bieberbach domain avoiding a neighborhood of a variety of codimension~2},
Math. Ann. \textbf{316} (2000), no.~4, 699--702.
\href{https://doi.org/10.1007/s002080050350}{doi:10.1007/s002080050350}.

\bibitem{CFZ04}
J.~M.~Chang, M.~L.~Fang, and L.~Zalcman, \emph{Normal families of holomorphic functions},
Illinois J. Math. \textbf{48} (2004), no.~1, 319--337.
\href{https://doi.org/10.1215/ijm/1258136186}{doi:10.1215/ijm/1258136186}.

\bibitem{CH65}
J.~Clunie and W.~K.~Hayman, \emph{The spherical derivative of integral and meromorphic functions},
Comment. Math. Helv. \textbf{40} (1965), 117--148.
\href{https://doi.org/10.1007/BF02564366}{doi:10.1007/BF02564366}.

\bibitem{Ere03}
A.~Eremenko, \emph{Progress in entire and meromorphic functions},
report on problems from Hayman's lists, May 15, 2003.
Available at
\href{https://www.math.purdue.edu/~eremenko/dvi/progr.pdf}{https://www.math.purdue.edu/~eremenko/dvi/progr.pdf}.

\bibitem{Ere06}
A.~Eremenko, \emph{Exceptional values in holomorphic families of entire functions},
Michigan Math. J. \textbf{54} (2006), no.~3, 687--696.
\href{https://doi.org/10.1307/mmj/1163789921}{doi:10.1307/mmj/1163789921}.

\bibitem{Ere10}
A.~Eremenko, \emph{Brody curves omitting hyperplanes},
Ann. Acad. Sci. Fenn. Math. \textbf{35} (2010), 565--570.
\href{https://doi.org/10.5186/aasfm.2010.3534}{doi:10.5186/aasfm.2010.3534}.

\bibitem{EL92}
A.~Eremenko and M.~Yu.~Lyubich, \emph{Dynamical properties of some classes of entire functions},
Ann. Inst. Fourier (Grenoble) \textbf{42} (1992), no.~4, 989--1020.
\href{https://doi.org/10.5802/aif.1318}{doi:10.5802/aif.1318}.

\bibitem{ER96}
A.~Eremenko and L.~A.~Rubel, \emph{The arithmetic of entire functions under composition},
Adv. Math. \textbf{124} (1996), no.~2, 334--354.
\href{https://doi.org/10.1006/aima.1996.0087}{doi:10.1006/aima.1996.0087}.

\bibitem{For11}
F.~Forstneri\v{c}, \emph{Stein Manifolds and Holomorphic Mappings: The Homotopy Principle in Complex Analysis},
Ergebnisse der Mathematik und ihrer Grenzgebiete. 3. Folge. A Series of Modern Surveys in Mathematics, vol.~56,
Springer, Heidelberg, 2011.


\bibitem{Gro68}
F.~Gross, \emph{On factorization of meromorphic functions},
Trans. Amer. Math. Soc. \textbf{131} (1968), 215--222.
\href{https://doi.org/10.2307/1994691}{doi:10.2307/1994691}.

\bibitem{GY72}
F.~Gross and C.-C.~Yang, \emph{The fix-points and factorization of meromorphic functions},
Trans. Amer. Math. Soc. \textbf{168} (1972), 211--219.
\href{https://doi.org/10.1090/S0002-9947-1972-0301175-2}{doi:10.1090/S0002-9947-1972-0301175-2}.

\bibitem{HL19}
W.~K.~Hayman and E.~F.~Lingham, \emph{Research Problems in Function Theory: Fiftieth Anniversary Edition},
Springer, Cham, 2019.
\href{https://doi.org/10.1007/978-3-030-25165-9}{doi:10.1007/978-3-030-25165-9}.

\bibitem{Mar31}
F.~Marty, \emph{Recherches sur la r\'epartition des valeurs d'une fonction m\'eromorphe},
Ann. Fac. Sci. Toulouse Sci. Math. Sci. Phys. \textbf{23} (1931), 183--261.
\href{https://doi.org/10.5802/afst.367}{doi:10.5802/afst.367}.

\bibitem{Mon27}
P.~Montel, \emph{Le\c{c}ons sur les familles normales de fonctions analytiques et leurs applications},
Gauthier-Villars, Paris, 1927.

\bibitem{Ng01}
T.~W.~Ng, \emph{Imprimitive parametrization of analytic curves and factorizations of entire functions},
J. London Math. Soc. (2) \textbf{64} (2001), no.~2, 385--394.
\href{https://doi.org/10.1112/S0024610701002472}{doi:10.1112/S0024610701002472}.

\bibitem{PZ00}
X.~Pang and L.~Zalcman, \emph{Normal families and shared values},
Bull. London Math. Soc. \textbf{32} (2000), no.~3, 325--331.
\href{https://doi.org/10.1112/S002460939900644X}{doi:10.1112/S002460939900644X}.

\bibitem{Rit22}
J.~F.~Ritt, \emph{Prime and composite polynomials},
Trans. Amer. Math. Soc. \textbf{23} (1922), no.~1, 51--66.

\bibitem{RR88}
J.-P.~Rosay and W.~Rudin, \emph{Holomorphic maps from $\C^n$ to $\C^n$},
Trans. Amer. Math. Soc. \textbf{310} (1988), no.~1, 47--86.
\href{https://doi.org/10.1090/S0002-9947-1988-0929658-4}{doi:10.1090/S0002-9947-1988-0929658-4}.

\bibitem{RY95}
L.~A.~Rubel and C.-C.~Yang, \emph{The factorization of $A(z)+B(w)$ under composition},
Illinois J. Math. \textbf{39} (1995), no.~2, 258--270.
\href{https://doi.org/10.1215/ijm/1255986547}{doi:10.1215/ijm/1255986547}.

\bibitem{Zal98}
L.~Zalcman, \emph{Normal families: new perspectives},
Bull. Amer. Math. Soc. (N.S.) \textbf{35} (1998), no.~3, 215--230.
\href{https://doi.org/10.1090/S0273-0979-98-00755-1}{doi:10.1090/S0273-0979-98-00755-1}.

\end{thebibliography}
\end{document}